\def\@seccntformat#1{\csname the#1\endcsname. } 
\def\@biblabel#1{#1.}
\title[A new construction of Deza graphs through $\pi$-local fusion graphs]{A new construction of Deza graphs through $\pi$-local fusion graphs of finite simple groups of Lie-type of even characteristic} 
 \author{L.Yu. Tsiovkina} 
\address{N.N.~Krasovskii Institute of Mathematics and Mechanics of the Ural Branch of the  Russian Academy of Sciences, 16 S.~Kovalevskaya str., Yekaterinburg, 620108 Russia
}
\email{l.tsiovkina@gmail.com}
\date{}
\theoremstyle{theorem} 
\newtheorem{prop}{Proposition}[section]
\newtheorem{thm}[prop]{Theorem}
\newtheorem{lem}[prop]{Lemma}
\theoremstyle{definition}
\newtheorem*{rem}{Remark}
\begin{document}

\vspace{\baselineskip}
\vspace{\baselineskip}

\begin{abstract} 
In this paper, we investigate the  structure of $\pi$-local fusion graphs of some finite simple groups of Lie-type
of even characteristic. We indicate a strong  connection between such graphs and 
other combinatorial objects, as antipodal covers and Deza graphs.
In particular, we find several infinite families of $\pi$-local fusion graphs of finite simple groups of Lie-type of even characteristic that are strictly Deza graphs. 
\\
\\
\textbf{Keywords}:  $\pi$-local fusion graph, simple group of Lie-type, Deza graph, distance-regular graph,  antipodal cover. 
\\
\textbf{MSC}: Primary 05C25; Secondary 05E18.
\end{abstract}
\maketitle
\section*{Introduction}

 Let $G$ be a group with  a   $G$-conjugacy class $X$ of involutions, let
$\omega(G)$ denote  the set of element orders of $G$, and let  $\pi\subseteq\omega(G)$.
Denote by $gF_{\pi}(G,X)$   the graph on $X$, in which involutions $x$ and $y$ are adjacent if
and only if $x\ne y$ and the order of the product $xy$ is contained in $\pi$; we call $gF_{\pi}(G,X)$   a  \textit{$\pi$-product involution graph} of  $G$. 
If the set $\pi$ consists only of odd numbers, then this graph  is called a \textit{$\pi$-local fusion graph} of $G$ and denoted by $F_{\pi}(G,X)$. 
Local fusion graphs of a finite group reflect the structure of its conjugacy classes of involutions, since
  each path between given vertices in such a graph allows one to determine an element that conjugates its end vertices. A study of their various properties  
  is of interest both for abstract group theory and for computational group theory 
  (in particular, when solving the problem of construction of an involution centralizer).
In this regard, it becomes an important step to  investigate local fusion graphs of  finite simple groups. 
For more background and motivation, we refer the reader to \cite{B11}.
 
 To date, main attention has been paid to local fusion graphs of symmetric groups, sporadic simple groups, 
and finite simple groups of Lie-type. These graphs were  studied in \cite{B11,BGR13,RW16,BR16}, where their basic properties, such as diameter and connectivity, were determined. 

In this paper, we are concerned with further structure of $\pi$-local fusion graphs of finite simple groups of Lie-type. 
We indicate a strong  connection between such graphs and intricate 
combinatorial objects, such as antipodal covers and Deza graphs.
In particular, we find several infinite families of $\pi$-local fusion graphs of finite simple groups of Lie-type of even characteristic that are 	strictly Deza graphs. It is worth noting that the proofs 
proposed in the present paper are of a mainly combinatorial nature.

\section{Preliminaries}

 Next we list  some  terminology and  notation that are used in this paper.  
Throughout the paper we consider undirected graphs without loops or multiple edges.   
The distance between  vertices $a$ and $b$ of a graph  $\Gamma$ is denoted by $\partial_\Gamma(a,b)$.  
For a vertex $a$ of a graph  $\Gamma$,  we denote by $\Gamma_i(a)$ the \textit{$i$-neighborhood} of $a$, 
that is the subgraph of $\Gamma$  induced by the set $\{b\in \Gamma |\partial_\Gamma(a,b)=i\}$,
and the size of $\Gamma_1(a)$ is said to be the \textit{valency} of $a$ in $\Gamma$. 
For a connected graph $\Gamma$ of diameter $d$ and a subset of indices $I\subseteq\{0,...,d\}$ we denote by $\Gamma_{I}$ the graph on the vertex set of $\Gamma$, whose edges are the pairs of vertices $a$ and $b$ 
such that $\partial_\Gamma(a,b)\in I$. 
A graph is said to be \textit{regular}, if all its vertices  have the same valency. 
A connected  graph  $\Gamma$ of diameter $d$ is called  \textit{distance-regular}, if there are constants
 $c_i,a_i$ and $b_i$ such that for all $i\in \{0,1,\ldots,d\}$ and for each pair  of vertices $x$ and $y$ 
 such that $\partial_{\Gamma}(x,y)=i$, the following equalities hold: $c_i=|\Gamma_{i-1} (x)\cap \Gamma_1(y)|, a_i=|\Gamma_{i} (x)\cap \Gamma_1(y)|$, and $b_i=|\Gamma_{i+1} (x)\cap \Gamma_1(y)|$ (it is assumed that $b_d=c_0=0$), and, in particular, $|\Gamma_1(x)|=b_0=c_i+a_i+b_i$ (implying $\Gamma$ is regular of valency $b_0$).  The consequence $\{b_0, b_1,\ldots, b_{d-1} ; c_1,\ldots, c_d \}$ is called the \textit{intersection array} of $\Gamma$. 
  If the binary relation ``to be at distance 0 or d'' on the set of vertices of a connected graph $\Gamma$
  of diameter $d$  is an equivalence relation, then the graph $\Gamma$ is called \textit{antipodal} and 
  classes of this relation are called \textit{antipodal classes}  of $\Gamma$.
 An important subclass of the so-called imprimitive distance-regular graphs is formed by antipodal distance-regular graphs of diameter 3. The latter ones are antipodal covers of complete graphs, which emerge in various 
 geometric and combinatorial objects \cite{BCN,GH}.

A regular graph $\Gamma$ of valency $k$ on $n$ vertices is called a \textit{Deza graph} with parameters $(n,k,b,a)$
if the number of common neighbours of two distinct vertices takes on only two values $a$ or $b$ (it is assumed that $a\leq b$). 
Deza graphs were  originally invented  as a generalization of \textit{strongly regular graphs} (distance-regular graphs of diameter 2). A Deza graph is called a \textit{strictly Deza graph} if it has diameter $2$ and $a\ne b$
(the last condition implies it cannot be  strongly regular). Apart from other important classes of graphs, they  also include   the so-called \textit{divisible design graphs}, that is, the class of graphs whose every representative is regular  and admits a partition of the vertex set   into  classes of the same size  such that 
the number of common neighbors for any   two distinct vertices depends only on whether these  vertices belong to the same partition class or not  \cite{HKM}.  
For a divisible design graph, such a  partition of the vertex set is called  \textit{canonical}.

 Our group-theoretic terminology and  notation are mostly standard  and follow \cite{Asch,B11}.

The next result shows that there are infinite families of Deza graphs which are related to antipodal 
distance-regular graphs of diameter 3. 
 
\begin{lem}\label{l1}
	Let $\Gamma$ be an antipodal distance-regular graph of diameter $3$ with intersection array
	$\{k,(r-1)\mu,1;1,\mu,k\}$, where $r>2$ and $k=r\mu+1$.
	Then $\Gamma$ is a Deza graph  with parameters $$(r(k+1),  k, \mu, 0),$$  and
	$\Gamma_2$ is a Deza graph of diameter $2$ with parameters $$(r(k+1), (r-1)k, b, a),$$ where $\{a, b\}=\{(r-1)^2\mu, k(r-2)\}$. In particular, $\Gamma_2$ is a divisible design graph whose canonical partition
	coincides with 	the antipodal partition of $\Gamma$.	
\end{lem}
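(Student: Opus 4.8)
The plan is to read everything off the intersection array, since all the relevant quantities in both $\Gamma$ and $\Gamma_2$ are governed by pairwise $\Gamma$-distances. First I would record the standard layer sizes $k_0=1$, $k_1=k$, $k_2=k\,b_1/c_2=(r-1)k$ and $k_3=k_2\,b_2/c_3=r-1$, so that $\Gamma$ has $n=1+k+(r-1)k+(r-1)=r(k+1)$ vertices, its antipodal classes (which are the sets $\{x\}\cup\Gamma_3(x)$) have size $r$, and there are $n/r=k+1$ of them. I would also note $a_1=k-b_1-c_1=\mu$ and $a_2=k-b_2-c_2=(r-1)\mu$, using $k=r\mu+1$. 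The one structural input I would isolate at the outset is that $\Gamma$ is an antipodal $r$-cover of $K_{k+1}$: since the antipodal quotient is the complete graph $K_{k+1}$, each vertex has at least one neighbour in every other antipodal class, and as its $k$ neighbours are spread over the $k$ remaining classes it has \emph{exactly} one neighbour in each.

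For the first assertion I would simply use that in a distance-regular graph the number of common neighbours of two vertices $x,y$ depends only on $h=\partial_\Gamma(x,y)$: it equals $a_1=\mu$ when $h=1$, equals $c_2=\mu$ when $h=2$, and equals $0$ when $h=3$ (a neighbour of $x$ lies at distance $\ge 2$ from any antipode of $x$). Hence this number takes only the two values $0$ and $\mu$, which is exactly the statement that $\Gamma$ is a Deza graph with parameters $(r(k+1),k,\mu,0)$.

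The substance lies in $\Gamma_2$, which is regular of valency $k_2=(r-1)k$. Here $x,y$ are adjacent precisely when $\partial_\Gamma(x,y)=2$, and the number of their common neighbours in $\Gamma_2$ is the number $p^{h}_{22}$ of vertices at $\Gamma$-distance $2$ from each of $x,y$, where $h=\partial_\Gamma(x,y)$. I would compute $p^1_{22},p^2_{22},p^3_{22}$ by a class-by-class count using the one-neighbour-per-class property: any admissible $z$ must avoid the classes of $x$ and $y$, and inside each remaining class $F$ the vertices of $\Gamma_2(x)\cap\Gamma_2(y)$ are all of $F$ except the unique neighbour of $x$ in $F$ and the unique neighbour of $y$ in $F$. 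Thus each such class contributes $r-1$ if those two neighbours coincide and $r-2$ otherwise, and the number of classes in which they coincide equals the number of common $\Gamma$-neighbours of $x,y$. For $h=3$ (antipodal $x,y$, same class) there are $k$ admissible classes and no coincidences (as two antipodes have no common neighbour), giving $p^3_{22}=k(r-2)$. For $h\in\{1,2\}$ there are $k-1$ admissible classes and exactly $\mu$ coincidences (since $a_1=c_2=\mu$), giving
\[
p^1_{22}=p^2_{22}=\mu(r-1)+(k-1-\mu)(r-2)=\mu+(k-1)(r-2)=(r-1)^2\mu,
\]
where the last equality uses $k-1=r\mu$. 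So the common-neighbour count in $\Gamma_2$ takes only the values $(r-1)^2\mu$ (at $\Gamma$-distance $1$ or $2$) and $k(r-2)$ (at $\Gamma$-distance $3$); both are positive because $r>2$, whence every pair non-adjacent in $\Gamma_2$ still has a common neighbour and $\Gamma_2$ has diameter $2$. This yields the Deza parameters $(r(k+1),(r-1)k,b,a)$ with $\{a,b\}=\{(r-1)^2\mu,k(r-2)\}$.

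Finally, the equality $p^1_{22}=p^2_{22}$ is exactly what gives the last claim: the $\Gamma_2$-common-neighbour count depends only on whether $x,y$ lie in the same antipodal class (value $k(r-2)$, same class $=$ $\Gamma$-distance $3$) or in different classes (value $(r-1)^2\mu$, different classes $=$ $\Gamma$-distance $1$ or $2$). Since all antipodal classes have the common size $r$, this is precisely the defining property of a divisible design graph whose canonical partition is the antipodal partition. I expect the main obstacle to be the $\Gamma_2$ computation, and in particular recognizing that $p^1_{22}$ and $p^2_{22}$ are \emph{forced} to coincide: it is the equality $a_1=c_2=\mu$, special to this intersection array, that collapses the distance-$1$ and distance-$2$ cases into a single ``different class'' value, and this coincidence is what makes the divisible design structure appear at all. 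Once the class-by-class bookkeeping is set up, the algebraic simplification to $(r-1)^2\mu$ via $k=r\mu+1$ is routine.
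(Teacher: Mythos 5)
Your proof is correct, and its overall skeleton is the same as the paper's: everything reduces to computing the intersection numbers $p^1_{22}$, $p^2_{22}$, $p^3_{22}$, observing that $p^1_{22}=p^2_{22}=(r-1)^2\mu$ while $p^3_{22}=k(r-2)$, and then reading off the Deza and divisible-design-graph conclusions. The one genuine difference is in how those numbers are obtained. The paper simply substitutes into the general formulas of \cite[Lemma 4.1.7]{BCN}, namely $p^1_{22}=b_1^2/c_2$, $p^2_{22}=b_1+a_2(a_2-a_1)/c_2$ and $p^3_{22}=c_3(a_2+a_3-a_1)/c_2$, so the coincidence $p^1_{22}=p^2_{22}$ appears only after the algebra. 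You instead exploit the covering structure (exactly one neighbour of each vertex in every other antipodal class) to count $\Gamma_2(x)\cap\Gamma_2(y)$ class by class, which makes the mechanism transparent: each admissible class contributes $r-2$ or $r-1$ according to whether the two distinguished neighbours coincide, and the number of coincidences is $a_1=\mu$ at distance $1$ and $c_2=\mu$ at distance $2$. This is more self-contained and explains \emph{why} the distance-$1$ and distance-$2$ cases collapse (it is exactly the equality $a_1=c_2$ forced by $k=r\mu+1$), at the cost of having to justify the one-neighbour-per-class property; the paper's route is shorter but opaque on this point. Your treatment of the remaining claims (diameter $2$ from positivity of both values when $r>2$ and $\mu\ge 1$, the parameters of $\Gamma$ itself, and the identification of the canonical partition with the antipodal partition) matches what the paper leaves to the reader and is sound.
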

\begin{proof}
   
First, let us compute the values of the intersection numbers  $p^1_{22}, p^2_{22}$ and $p^3_{22}$ of $\Gamma$
(recall that  $p_{ij}^t=|\{x\in \Gamma| \partial_\Gamma(a,x)=i, \partial_\Gamma(x,b)=j\}|$  does not depend on the choice of the pair of vertices $(a,b)$ with $\partial_\Gamma(a,b)=t$).
By \cite[Lemma 4.1.7]{BCN}, we have  
\[  p^0_{22}=(r-1)k, 
	p^1_{22}=\frac{b_1^2}{c_2}=(r-1)^2\mu,  
   p^2_{22}=b_1+\frac{a_2(a_2-a_1)}{c_2}=(r-1)^2\mu, 
\]
\[    p^3_{22}=\frac{c_3(a_2+a_3-a_1)}{c_2}=k(r-2).
\]
It follows that $\Gamma_2$ is a regular graph of valency $p^0_{22}$. 
Moreover,  observe that $\Gamma_2$ has diameter 2. Indeed,  for any  distinct non-adjacent vertices $a$ and $b$
of $\Gamma_2$ we have $\partial_\Gamma(a,b)=1,3$. Besides,  since $p^1_{22}$ and $p^3_{22}$ are both non-zero, there is a vertex $x$  such that $\partial_\Gamma(a,x)=\partial_\Gamma(b,x)=2$,  and thus $\partial_{\Gamma_2}(a,b)\le 2$.
It is also  clear that $\Gamma_2$ cannot be a complete graph, which proves the required claim.
Hence any two distinct vertices of  $\Gamma_2$  have precisely $p^1_{22}=p^2_{22}$  or $p^3_{22}$ 
common neighbors, which implies that  $\Gamma_2$ is a Deza graph (which is also edge-regular with $\lambda=p^2_{22}$). 

The remaining statements follow  immediately from the definition of $\Gamma$.
\end{proof}
\begin{rem} Note that the result of   this lemma 
was initially proved in  \cite[Proposition 4.15]{HKM} in a matrix form, 
however, no explicit formulas for the  quadruple of parameters of $\Gamma_2$ 
were provided there.\end{rem}

\begin{lem}\label{l2}
	Let $\Gamma$ be an antipodal distance-regular graph of diameter $3$ with intersection array
	$\{k,(r-1)\mu,1;1,\mu,k\}$, where $r\notin\{2, \mu+2\}$ and $k=r\mu+1$,
	and put $\Phi=\Gamma_2$. Then  
	 the graph $\Omega^c$ on the vertex set of $\Phi$, whose edges are
	the pairs of distinct vertices $x$ and $y$ such that $|\Phi_1(x)\cap\Phi_1(y)|=c$, is isomorphic to the graph $K_{(k+1)\times r}$ (that is, a complete multipartite graph with $k+1$
	parts of the same size $r$) 	 if $c=(r-1)^2\mu$, while it is  a union of $k+1$ isolated $r$-cliques
	if $c=k(r-2)$.
\end{lem}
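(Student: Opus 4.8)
The plan is to deduce everything from the intersection numbers already obtained in the proof of Lemma~\ref{l1} and to reinterpret them through the antipodal structure of $\Gamma$. Recall that an antipodal distance-regular graph of diameter $3$ with the stated array is an antipodal $r$-cover of $K_{k+1}$: its antipodal classes are exactly the $k+1$ fibres, each of size $r$, the vertex set splits as $r(k+1)$, and two distinct vertices lie in a common antipodal class if and only if they are at distance $3$ in $\Gamma$. This is precisely the partition that Lemma~\ref{l1} names the canonical partition of the divisible design graph $\Phi=\Gamma_2$. So the whole statement should reduce to showing that the common-neighbour count in $\Phi$ detects exactly this partition.

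First I would express, for distinct vertices $x,y$ of $\Phi$, the quantity $|\Phi_1(x)\cap\Phi_1(y)|$ as a function of $\partial_\Gamma(x,y)$. Since $\Phi_1(z)=\Gamma_2(z)$, this count is precisely $p^t_{22}$ with $t=\partial_\Gamma(x,y)$. From the proof of Lemma~\ref{l1} we have $p^1_{22}=p^2_{22}=(r-1)^2\mu$ and $p^3_{22}=k(r-2)$. Hence the count equals $(r-1)^2\mu$ exactly when $x$ and $y$ lie in distinct antipodal classes (that is, $\partial_\Gamma(x,y)\in\{1,2\}$), and it equals $k(r-2)$ exactly when $x$ and $y$ lie in one common antipodal class (that is, $\partial_\Gamma(x,y)=3$). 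The point that makes this a clean dichotomy is the coincidence $p^1_{22}=p^2_{22}$, so that distance $1$ and distance $2$ are \emph{not} separated by the common-neighbour count and both feed into the same graph $\Omega^{(r-1)^2\mu}$.

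Next I would check that the two admissible values of $c$ are genuinely distinct, so that $c$ determines membership in a common antipodal class unambiguously. A short computation using $k=r\mu+1$ gives $(r-1)^2\mu-k(r-2)=\mu+2-r$, which is nonzero precisely because $r\ne\mu+2$; this is the only place where that hypothesis enters. Consequently every pair of distinct vertices is an edge of exactly one of $\Omega^{(r-1)^2\mu}$ and $\Omega^{k(r-2)}$, so these two graphs are complementary. The conclusion is then immediate: in $\Omega^{k(r-2)}$ two distinct vertices are adjacent iff they share an antipodal class, and since the $k+1$ classes partition the vertex set into blocks of size $r$, this graph is a disjoint union of $k+1$ cliques $K_r$; in $\Omega^{(r-1)^2\mu}$ two distinct vertices are adjacent iff they lie in distinct classes, which is exactly the adjacency of the complete multipartite graph $K_{(k+1)\times r}$ relative to the same partition.

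The argument is essentially bookkeeping once Lemma~\ref{l1} is available, and I do not expect a serious obstacle. The only genuinely substantive ingredients are the distinctness of the two common-neighbour values, controlled by the exclusion $r\ne\mu+2$, and the standard identification of the antipodal classes of a diameter-$3$ antipodal distance-regular graph with the fibres of an $r$-cover of $K_{k+1}$. If anything needs care, it is verifying that these two facts together rule out any mismatch between ``distance $3$ in $\Gamma$'' and ``common-neighbour count $k(r-2)$ in $\Phi$''; this is exactly what $p^1_{22}=p^2_{22}\ne p^3_{22}$ guarantees.
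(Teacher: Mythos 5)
Your proposal is correct and follows essentially the same route as the paper: both identify $\Omega^{k(r-2)}$ with $\Gamma_3$ (the antipodal classes turned into $r$-cliques) via $p^1_{22}=p^2_{22}\ne p^3_{22}$, use $r\ne\mu+2$ to separate the two common-neighbour values, and obtain $\Omega^{(r-1)^2\mu}$ as the complement $K_{(k+1)\times r}$. Your explicit computation $(r-1)^2\mu-k(r-2)=\mu+2-r$ is a nice touch that the paper leaves implicit.
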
 
\begin{proof}
First note that condition $r\neq \mu+2$ certifies that $\Phi$ is a strictly Deza graph. 
Then by Lemma~\ref{l1} it easy to see that $\Omega^{(r-1)^2\mu}$ is the complement graph for $\Omega^{k(r-2)}$.
Take $c=k(r-2)$. It remains to note that $x\in (\Omega^c)_1(y)$ if and only if $x\in \Gamma_3(y)$. That is, $\Omega^c=\Gamma_3$ is a union of $k+1$ isolated $r$-cliques and hence $\Omega^{(r-1)^2\mu}=\overline{\Omega^c}\cong K_{(k+1)\times r}$.
\end{proof}
\begin{rem} 
It follows that Lemma~\ref{l1} actually provides a construction of an infinite class of Deza  graphs with    imprimitive strongly regular graphs $\Omega^c$, which seems to be unnoticed before. As we will see below (see also \cite{Tsi15,Tsi17}), the structure of the automorphism group of such a Deza graph can be rather sophisticated. 
\end{rem}
\medskip

Let $G\in \{L_2(q),Sz(q),U_3(q)\}$,  where $q=2^n\ge 4$. Further we denote by   
 $\chi(G)$   the  associated prime number of $G$ in the sense by M. Suzuki, that is
 \[\chi(G)= \begin{cases}
  5,  & \mbox{if } G=Sz(q)  \\
  3, & \mbox{if } G\in \{PSL_2(q),PSU_3(q)\}
\end{cases}.\] 
A pair of involutions $\{x,y\}$  of  $G$ is called \textit{distinguished}, if $|xy|=\chi(G)$. 
Note that due to a result of Suzuki (e.g. see \cite{Suz0, Suz2}),   $G$ acts transitively on the set of ordered distinguished  pairs of involutions, and hence there is a unique conjugacy class  $\mathcal{S}$  of dihedral subgroups of order  ${2\cdot\chi(G)}$ in  $G$. In other words, a pair of involutions $\{x,y\}$  of  $G$ is distinguished if and only if $\langle x, y\rangle \in \mathcal{S}$.  
This observation, in particular, implies there is an isomorphism between $\{\chi(G)\}$-local fusion graph
of $G=PSL_2(q)$ with $q=2^n\ge 4$ and its $S_3$-involution graph, which was shown to have diameter 3 in \cite{DevGiu} and moreover, by \cite{Tsi17},  is isomorphic to a Mathon graph (for a construction of the latter, see  \cite[Proposition 12.5.3]{BCN}).
Next we reformulate   some results of  \cite{Tsi15} and \cite{Tsi17} in terms of local fusion graphs,
that will be basic to   further  arguments.
 
\begin{thm}[see {\cite{Tsi15,Tsi17}}]\label{t2}
For each  group $G\in \{PSL_2(q), PSU_3(q), Sz(q)\}$ with $q=2^n\ge 4$, its $\{\chi(G)\}$-local fusion graph
is an arc-transitive antipodal distance-regular graph of diameter $3$  with intersection array 
\begin{itemize}
\item[$(1)$] $\{q,q-2,1;1,1,q\}$  if $G=PSL_2(q)$,
\item[$(2)$] $\{q^2,q^2-q-2,1;1,q+1,q^2\}$ if $G=Sz(q)$, or
\item[$(3)$] $\{q^3,q^3-q^2-q-2,1;1,q^2+q+1,q^3\}$ if $G=PSU_3(q)$.
\end{itemize}\end{thm}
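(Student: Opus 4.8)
The plan is to establish that each of these three $\{\chi(G)\}$-local fusion graphs is an antipodal distance-regular graph of diameter $3$ with the stated intersection array, and that $G$ acts arc-transitively on it. Since the excerpt already records Suzuki's theorem that $G$ acts transitively on ordered distinguished pairs of involutions and that $\langle x,y\rangle \in \mathcal{S}$ for exactly one conjugacy class $\mathcal{S}$ of dihedral subgroups of order $2\chi(G)$, arc-transitivity is essentially immediate: an arc of $F_{\{\chi(G)\}}(G,X)$ is precisely an ordered distinguished pair, so $G$ is transitive on arcs. This reduces the core of the proof to verifying the combinatorial structure. I would begin by fixing the involution class $X$ (the unique class of involutions in each of these groups, using the structure of Sylow $2$-subgroups: elementary abelian in $L_2(q)$, and the single class in $Sz(q)$ and $U_3(q)$), identifying the vertex set with $X$, and fixing a base involution $x_0$ with stabilizer $C_G(x_0)$.

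Next I would compute the local structure by working inside the centralizer $C_G(x_0)$ and analysing how it acts on the neighbours $\Gamma_1(x_0)$, i.e. on the involutions $y$ with $|x_0 y| = \chi(G)$. The valency $b_0 = k$ should come out to $q$, $q^2$, $q^3$ respectively by counting involutions $y$ forming a distinguished pair with $x_0$; one expects this count to be governed by the order of a suitable unipotent subgroup, which matches the given first intersection numbers. The antipodal structure is the conceptual heart: I would show that the relation ``$x = y$ or $\partial(x,y) = 3$'' is an equivalence relation with classes of size $r$, where the diameter-$3$ condition forces $c_3 = k = b_0$ (so antipodal classes are ``opposite'' to a given vertex as a whole). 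Concretely I would identify the antipodal classes with a natural $G$-invariant partition of $X$ — plausibly the orbits of the centres of Sylow $2$-subgroups or cosets of a fixed subgroup — and verify $|X| = r(k+1)$ is consistent with $k = r\mu + 1$ read off from the arrays (here $r=2$, but note the array has the diameter-$3$ antipodal shape $\{k,(r-1)\mu,1;1,\mu,k\}$ only after matching $\mu$ to $q-2$, $q^2-q-2$, $q^3-q^2-q-2$ via $(r-1)\mu = b_1$, which must be checked case by case).

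The decisive and most delicate step is verifying that the graph is genuinely \emph{distance-regular} with the claimed array, rather than merely vertex- or arc-transitive. Arc-transitivity alone does not give distance-regularity; what I would instead exploit is that transitivity on ordered distinguished pairs makes $G$ transitive on ordered pairs at distance $1$, and then show that $G$ is transitive on ordered pairs at each distance $i \in \{0,1,2,3\}$, i.e. that the graph is distance-transitive. Distance-transitivity implies distance-regularity, and the intersection numbers can then be computed as orbit sizes of point stabilizers. The main obstacle will be proving transitivity at distance $2$ and verifying the middle parameters $c_2 = \mu$ and $b_1 = (r-1)\mu$; this amounts to a careful count, inside $C_G(x_0)$, of common distinguished partners or of paths of length $2$, and it is here that the fine subgroup structure of $L_2(q)$, $Sz(q)$, $U_3(q)$ (the Suzuki/Ree-type $2$-transitive permutation actions and their point stabilizers) must be invoked. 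Since the excerpt explicitly cites \cite{Tsi15,Tsi17} and frames the theorem as a reformulation, I would lean on those computations of the relevant permutation-character or orbit data, reinterpreting each intersection number as the size of the appropriate suborbit and checking that the three suborbit-size triples reproduce exactly arrays $(1)$, $(2)$, $(3)$.
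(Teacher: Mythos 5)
The decisive step of your plan --- upgrading arc-transitivity to distance-transitivity and then reading the intersection numbers off as suborbit sizes --- cannot be carried out: these graphs are arc-transitive but provably \emph{not} distance-transitive under $G$. The vertex stabilizer is $G_x=C_G(x)$, which for $G=PSL_2(q)$ (resp.\ $Sz(q)$) is exactly a Sylow $2$-subgroup, of order $q$ (resp.\ $q^2$), whereas $|\Gamma_2(x)|=kb_1/c_2=(q-2)q$ (resp.\ $(q-2)q^2$) already exceeds $|C_G(x)|$ for all $q\ge 4$, so $C_G(x)$ cannot act transitively on $\Gamma_2(x)$. For $PSU_3(q)$ one has $|C_G(x)|=q^3(q+1)/\gcd(3,q+1)$ and $|\Gamma_2(x)|=(q-2)q^3$, and transitivity would force $(q-2)\mid (q+1)/\gcd(3,q+1)$, which fails for every even $q\ge 4$. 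Worse, in all three cases $C_G(x)$ centralizes the set of involutions of $Z(S)$ pointwise, so it has $q-2$ singleton orbits on $\Gamma_3(x)$. Distance-regularity here genuinely does not come from distance-transitivity; in \cite{Tsi15,Tsi17} (which the paper simply cites --- Theorem~\ref{t2} is stated without an internal proof, as a reformulation of those results together with the Devillers--Giudici/Mathon identification for $PSL_2(q)$) it is obtained by realizing the graphs as antipodal covers of the complete graph on the $q^l+1$ Sylow $2$-subgroups and verifying the parameters $\lambda$ and $\mu$ by direct counting of distinguished pairs, using Suzuki's description of these $2$-transitive actions \cite{Suz0}. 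Your proof would need to be rebuilt around such a counting argument (or an equivalent cover-regularity criterion in the sense of Godsil--Hensel), not around transitivity at each distance.

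There is also an error in your parameter bookkeeping: you take $r=2$ and match $\mu$ to $b_1$ via $(r-1)\mu=b_1$, giving $\mu=q-2$, $q^2-q-2$, $q^3-q^2-q-2$. But in the template $\{k,(r-1)\mu,1;1,\mu,k\}$ one has $\mu=c_2$, which the arrays to be proved say equals $1$, $q+1$, $q^2+q+1$; the fibre size is $r=q-1$ in all three cases (the $q-1$ involutions in the centre of a Sylow $2$-subgroup), and $r=2$ is inconsistent with the vertex count, since $r(k+1)=2(q+1)\ne q^2-1$. The arc-transitivity portion of your argument is correct and is essentially the observation the paper itself makes, via Suzuki's transitivity on ordered distinguished pairs, in the paragraph preceding the theorem.
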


 \section{Main result}  
Now we present the main result of this paper.

\begin{thm}\label{t1} For each group $G\in \{PSL_2(q), PSU_3(q), Sz(q)\}$ with $q=2^n\ge 4$ and  for $\pi$
being the subset of all odd integers in $\omega(G)-\{2,\chi(G)\}$,
a (unique) $\pi$-local fusion graph of $G$  is a vertex-transitive edge-regular Deza graph of diameter $2$
with parameters $(v, k, b, a)$ as follows:
\begin{itemize}
\item[$(1)$] $(q^2-1, q(q-2), q(q-3), (q-2)^2)$ if $G=PSL_2(q)$,
\item[$(2)$] $((q^2+1)(q-1), q^2(q-2), (q-2)^2(q+1), q^2(q-3))$ if $G=Sz(q)$,
\item[$(3)$] $((q^3+1)(q-1), q^3(q-2), (q-2)^2(q^2+q+1), q^3(q-3))$ if $G=PSU_3(q)$.
\end{itemize}
Moreover, it is a divisible design graph, in which every class of the canonical partition is 
formed by the set of involutions of a Sylow $2$-subgroup of $G$.
\end{thm}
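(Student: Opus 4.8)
The plan is to realize the $\pi$-local fusion graph $F_\pi(G,X)$ as the distance-$2$ graph $\Gamma_2$ of the $\{\chi(G)\}$-local fusion graph $\Gamma=F_{\{\chi(G)\}}(G,X)$, which by Theorem~\ref{t2} is an antipodal distance-regular graph of diameter $3$ with intersection array $\{k,(r-1)\mu,1;1,\mu,k\}$ for the appropriate triple $(k,\mu,r)$ (in every case $r=q-1$, while $(k,\mu)=(q,1),(q^2,q+1),(q^3,q^2+q+1)$ respectively). Both graphs carry the same vertex set $X$, so it suffices to compare their edge relations. First I would partition the set of pairs of distinct involutions according to the order of their product: the commuting pairs with $|xy|=2$, the distinguished pairs with $|xy|=\chi(G)$, and the rest. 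By definition the distinguished pairs are exactly the edges of $\Gamma$, that is, the pairs at distance $1$; and by the definition of $\pi$ the edges of $F_\pi(G,X)$ are exactly the pairs with $|xy|$ odd and $|xy|\neq\chi(G)$.

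Two group-theoretic facts drive the identification $F_\pi(G,X)=\Gamma_2$. The first is that every pair of non-commuting distinct involutions $x,y$ has $|xy|$ odd. For $G=PSL_2(q)$ this is immediate: a Sylow $2$-subgroup is elementary abelian, so a cyclic subgroup meeting $X$ nontrivially either lies in such a subgroup (hence has order $\le 2$) or in a torus of odd order $q\pm1$; thus $|xy|>2$ forces $|xy|$ odd. For $Sz(q)$ and $PSU_3(q)$ I would argue that $\langle x,y\rangle$ is dihedral and that, were $|xy|$ even and greater than $2$, it would contain a nontrivial $2$-element commuting with both $x$ and $y$; tracing this element into the (unique) Sylow $2$-subgroup of the relevant involution centralizer forces $x$ and $y$ into a common Sylow $2$-subgroup, hence to commute, a contradiction. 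The second fact is that the antipodal classes of $\Gamma$ are precisely the sets of involutions lying in the Sylow $2$-subgroups of $G$; equivalently, the relation ``$x=y$ or $|xy|=2$'' coincides with the antipodal relation of $\Gamma$. This rests on the structure of involution centralizers (each having a normal Sylow $2$-subgroup, so that commuting is \emph{transitive} on $X$ and its classes are exactly the Sylow involution sets of size $r$), and is implicit in the construction of $\Gamma$ in \cite{Tsi15,Tsi17}. Granting these two facts, the diameter-$3$ distance partition of $\Gamma$ matches the product-order partition: distance $1\leftrightarrow|xy|=\chi(G)$ and distance $3\leftrightarrow|xy|=2$, whence distance $2\leftrightarrow|xy|$ odd and $\neq\chi(G)$, which is exactly the edge relation of $F_\pi(G,X)$.

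With $F_\pi(G,X)=\Gamma_2$ established, the structural conclusions follow from Lemma~\ref{l1}: $\Gamma_2$ is an edge-regular Deza graph of diameter $2$ with parameters $(r(k+1),(r-1)k,b,a)$ where $\{a,b\}=\{(r-1)^2\mu,k(r-2)\}$, and it is a divisible design graph whose canonical partition is the antipodal partition of $\Gamma$; by the second fact above this antipodal partition is precisely the partition of $X$ into Sylow $2$-subgroup involution sets, which yields the ``moreover'' clause. Substituting the three triples $(k,\mu,r)$ and deciding in each case which of the two expressions is the smaller (a short computation gives $(r-1)^2\mu-k(r-2)$ equal to $4-q$, $4$, and $q^2+4$ respectively, so that $(r-1)^2\mu=a$ only for $PSL_2(q)$ while $k(r-2)=a$ for $Sz(q)$ and $PSU_3(q)$) produces exactly the quadruples listed in $(1)$--$(3)$. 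Finally, vertex-transitivity is immediate since $G$ acts transitively on $X$ by conjugation, and uniqueness of the $\pi$-local fusion graph follows from $X$ being the unique conjugacy class of involutions of $G$.

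The main obstacle I anticipate is the first group-theoretic fact for $Sz(q)$ and $PSU_3(q)$: unlike $PSL_2(q)$, these groups do possess elements of even order with nontrivial odd part (for instance $PSU_3(4)$ has elements of order $10$), so excluding even product orders for non-commuting involution pairs genuinely requires the centralizer analysis sketched above rather than a one-line torus argument. Pinning down that commuting involutions always share a Sylow $2$-subgroup---equivalently, that the antipodal partition of $\Gamma$ is the Sylow partition---is the technical heart of the argument, and is precisely what licenses both the identification $F_\pi(G,X)=\Gamma_2$ and the description of the canonical partition.
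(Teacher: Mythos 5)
Your proposal is correct and follows essentially the same route as the paper: identify $F_{\pi}(G,X)$ with $\Gamma_2$ for $\Gamma$ the $\{\chi(G)\}$-local fusion graph of Theorem~\ref{t2}, using Suzuki's results that commuting involutions are exactly those sharing a Sylow $2$-subgroup (so the antipodal classes are the Sylow involution sets) and that non-commuting involutions have odd product order (via the involution $(xy)^{m}$ centralized by both $x$ and $y$ together with the TI-property of Sylow $2$-subgroups), and then apply Lemma~\ref{l1}. Your parameter substitutions and the determination of which of $(r-1)^2\mu$ and $k(r-2)$ plays the role of $a$ in each case agree with the paper.
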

 
\begin{proof}
Put $\tilde\pi=\omega(G)-\{2,\chi(G)\}$. 
Let $X$ be the class of involutions of $G$ and let $\Gamma$ be the $\{\chi(G)\}$-local fusion graph of $G$. Then by Theorem~\ref{t2}
$\Gamma$ is an antipodal distance-regular graph of diameter 3 with intersection array 
$$\{q^l, (q-2)(q^l-1)/(q-1),1;1,(q^l-1)/(q-1), q^l\},$$
 where $q^l$ is exactly the size of a Sylow 2-subgroup of $G$ (so that $l\in \{1,2,3\}$).
Note that each antipodal class of $\Gamma$ is formed by the set of  (central) involutions of
 a Sylow $2$-subgroup of $G$ and there exactly $q^l+1$ such classes. 

Now define the graph $\Phi$ as the graph $\Gamma$ with  antipodal classes turned into cliques.
By construction, $\Phi$ coincides  with $\Gamma_{1,3}$. 
Let us prove that  $\Phi$ is a complement graph to  
a (unique) $\tilde\pi$-product involution graph $gF_{\tilde\pi}(G,X)$ of $G$.  
To find this, it suffices  to observe  that by  \cite{Suz0} (see also \cite{Suz2}) the order of product of any two involutions of $G$ equals 2 if and only if these involutions commute (and 
thus they belong to the same Sylow 2-subgroup of $G$).

Hence  $gF_{\tilde\pi}(G,X)=\Gamma_2$.

Therefore, by Lemma~\ref{l1}  
 $gF_{\tilde\pi}(G,X)$  is a  Deza graph (more precisely, a divisible design graph whose canonical partition coincides with antipodal partition of $\Gamma$)
of diameter 2, in which the number of common neighbors of two distinct vertices equals  $(q-2)^2(q^l-1)/(q-1)$ or $q^l(q-3)$. 
Clearly, these values are the same if and only if $l=1$ and $G=PSL_2(4)$ (yielding $gF_{\tilde\pi}(G,X)$ is isomorphic to  the Petersen graph).
Hence in all other cases $gF_{\tilde\pi}(G,X)$ is a strictly Deza graph.

Furthermore, note that the order of the product of every two non-commuting involutions of $G$ cannot be even.
On the contrary, suppose there are involutions $x$ and $y$  such that $|xy|=2m$ and 
$z=(xy)^m\ne 1$. Then $z^2=1$ and hence $z^x=z^{-1}=z=z^y$. 
But  ${\rm Syl}_2(G)$ is a TI-subset of $G$ and each Sylow 2-subgroup $S$ of $G$ acts (by conjugation) regularly 
 on ${\rm Syl}_2(G)\setminus\{S\}$ (see \cite{Suz0}), a contradiction.

Thus, we conlude that  $gF_{\tilde\pi}(G,X)$ coincides with the ${\pi}$-local fusion graph of $G$, where
${\pi}$ consists of  odd elements of $\tilde\pi$.
\end{proof}
  
\begin{rem} Except for the case $G=PSL_2(4)$, it appears that ${\pi}$-local fusion graphs of $G$ defined in Theorem~\ref{t1} are first established to be (strictly) Deza graphs in the course of this work. 
\end{rem}

 \section*{Acknowledgements}
This research was supported by the Russian Science Foundation under grant no. 20-71-00122  and performed in N.N. Krasovskii Institute of Mathematics and Mechanics of the Ural Branch of the Russian Academy of Sciences.

\end{document}